\setlist{  
listparindent=\parindent,
parsep=0pt,
}
\newtheorem{theorem}{Theorem}[section]
\newtheorem{lemma}[theorem]{Lemma}
\newtheorem{corollary}[theorem]{Corollary}
\theoremstyle{definition}
\theoremstyle{remark}
\newtheorem{remark}[theorem]{Remark}
\numberwithin{equation}{section}
\DeclareMathOperator{\supp}{supp}
\renewcommand{\epsilon}{t}
\newcommand{\Ds}{{(-\Delta)^{s}}}
\renewcommand{\d}{\mathrm{d}}
\newcommand{\dd}{\, \mathrm{d}}
\newcommand{\R}{\ensuremath{\mathbb R}} % Real numbers
\newcommand{\pv}{\, \mathrm{p.v.}\,}
\begin{document}

\title[Fractional Hardy-Rellich inequalities via integration by parts]{Fractional Hardy-Rellich inequalities \\ via integration by parts }

\author[N. De Nitti]{Nicola De Nitti}
\address[N. De Nitti]{Friedrich-Alexander-Universität Erlangen-Nürnberg, Department of Mathematics, Chair for Dynamics, Control, Machine Learning and Numerics (Alexander von Humboldt Professorship), Cauerstr. 11, 91058 Erlangen, Germany.}
\email{nicola.de.nitti@fau.de}

\author[S. M. Djitte]{Sidy Moctar Djitte}
\address[S. M. Djitte]{Friedrich-Alexander-Universität Erlangen-Nürnberg, Department of Mathematics, Chair for Dynamics, Control, Machine Learning and Numerics (Alexander von Humboldt Professorship), Cauerstr. 11, 91058 Erlangen, Germany. 
}
\email{sidy.m.djitte@fau.de}

\subjclass[2020]{26D10, 46E35, 35R11, 35A15.}
\keywords{Hardy inequality; Rellich inequality; Pohozaev identity; fractional Sobolev spaces; fractional Lapalcian.}

\begin{abstract}
We prove a fractional Hardy-Rellich inequality with an explicit constant in bounded domains of class $C^{1,1}$. The strategy of the proof generalizes an approach pioneered by E. Mitidieri (\emph{Mat. Zametki}, 2000) by relying on a Pohozaev-type identity.
\end{abstract}

\maketitle
%\tableofcontents

\section{Introduction}\label{sec:intro}
In \cite{MR1544414}, G. H. Hardy proved that, if $p > 1$ and $f$ is a non-negative function in $L^p(\R_+)$, then $f$ is integrable over the interval $(0, x)$ for every $x>0$ and 
\begin{align*}
\int_0^\infty\left(\frac{1}{x} \int_0^x f(t) \dd t \right)^p \dd x \le \left(\frac{p}{p-1} \right)^p \int_0^\infty f(x)^p \dd x
\end{align*}
holds; or, letting  $u(x) = \int_0^x f(t) \dd t$, 
\begin{align*}
    \int_0^\infty \frac{u(x)^p}{x^p} \dd x \le \left( \frac{p}{p-1}\right)^p \int_0^\infty |u'(x)|^p \dd x.
\end{align*}
The constant $(p/(p-1))^p$ was proved to be sharp by Landau in \cite{zbMATH02586070}. 
From these beginnings, many Hardy-type inequalities have been proven and have become fundamental tools in several branches of analysis. For further information and historical context, we refer to the surveys \cite{zbMATH07213519, MR1069756, MR2256532}.

The classical $N$-dimensional generalization of the Hardy inequality states that, for $N>1$, $1 \leq p<\infty$, with $p \neq N$, and for all $u \in C^{1,1}_c(\mathbb{R}^N \setminus\{0\})$,
$$
\int_{\mathbb{R}^N} \frac{|u(x)|^p}{|x|^p} \dd x \leqslant\left(\frac{p}{|N-p|}\right)^p \int_{\mathbb{R}^N} |\nabla u(x)|^p \dd x.
$$
More precisely, $u$ may belong to $W^{1, p}\left(\mathbb{R}^N\right)$ when $1 \leq p<N$ and $W^{1, p}\left(\mathbb{R}^N \setminus\{0\}\right)$ when $N<p<\infty$. 
Here the constant  $(p/|N-p|)^p$ is sharp and is not attained in these  Sobolev spaces. If $p=1$, equality holds for any positive symmetric decreasing function. For these and additional results, including discussions on the optimal value of the constant involved, we direct readers to, without claiming comprehensiveness, \cite{MR2048514, MR2048513, MR1431208, MR1769903, MR3408787} and the references cited therein.

A related inequality is due to Rellich \cite{MR0240668}:
$$
\frac{N^2(N-4)^2}{16} \int_{\mathbb{R}^N} \frac{|u({x})|^2}{|{x}|^4} \dd x \leq \int_{\mathbb{R}^N}|\Delta u({x})|^2 \dd x, 
$$
for $u \in W^{2,2}(\mathbb{R}^N)$ and $N>4$. We point to \cite{MR3408787,MR4480576} and the references therein for further information on Rellich-type inequalities (in particular, e.g., \cite[Section 6.4.2]{MR3408787} for comments on the cases $N \in \{2,3,4\}$).

In the present contribution, we shall focus on the following $L^p$ Rellich inequality (see, e.g., \cite{MR1769903}):  
\begin{align}\label{eq:hoh}
    c_{p, \theta}^p \int_{\Omega} \frac{|u(x)|^p}{|x|^{\theta+2}} \dd x \leq \int_{\Omega} \frac{|\Delta u(x)|^p}{|x|^{\theta+2-2 p}} \dd x,
\end{align}
for $u \in C^{1,1}_c(\Omega)$, $p>1$, $N>\theta+2$, $\theta \in \mathbb{R}$, $\Omega \subset \R^N$ a smooth bounded domain, and sharp constant given by  
$
c_{p, \theta}=(N-2-\theta)[(p-1)(N-2)+\theta]/p^2.
$
Our main aim is to establish a counterpart of \eqref{eq:hoh} where the Laplace operator is replaced by the \emph{fractional Laplacian operator}: 
\begin{align} \label{eq:hardy-intro}
\int_{\Omega} \frac{|u|^{p}(x)}{|x|^{\theta+2 s}} \dd x \lesssim  \int_{\Omega} \frac{\left|(-\Delta)^{s} u(x)\right|^{p}}{|x|^{\theta+2 s-2sp}} \dd x.
\end{align}

Several Hardy-type inequalities are already available for fractional operators. For instance, in \cite{MR436854}, Herbst proved that 
\begin{align}\label{eq:herbst}
 \tilde{\mathcal{C}}^p_{N, s, p} \int_{\mathbb{R}^N} \frac{|u(x)|^p}{|x|^{p s}} \dd x \leq \|(-\Delta)^{s / 2} u\|_{L^p(\R^N)}^p 
 \end{align}
for $1<p<\infty$, $s>0$, $N>p s$ and $u \in C^{1,1}_c\left(\mathbb{R}^N\right)$, 
with the optimal constant 
$
\tilde{\mathcal{C}}_{N, s, p}=2^{-s} \frac{\Gamma\left(\frac{N(p-1)}{2 p}\right) \Gamma\left(\frac{N-p s}{2 p}\right)}{\Gamma\left(\frac{N}{2 p}\right) \Gamma\left(\frac{N(p-1)+p s}{2 p}\right)} .
$
A related result is contained in \cite[Theorem 1.1]{MR2469027} (see  also  \cite{MR1986163}),  where it was shown that 
\begin{align}\label{eq:h}
\mathcal{C}_{N, s, p} \int_{\mathbb{R}^N} \frac{|u(x)|^p}{|x|^{p s}} \dd x \leq \iint_{\mathbb{R}^N \times \mathbb{R}^N} \frac{|u(x)-u(y)|^p}{|x-y|^{N+p s}}\dd x \dd y,
\end{align}
for all $u\in W^{s,p}(\R^N)$ with optimal constant\footnote{ See also \cite[Eq (3.5) and Eq. (3.6)]{MR2469027} for equivalent expressions of $\mathcal C_{N,s,p}$. Moreover, from \cite[Eq. (1.6)]{MR2469027}, if $p=2$, then the constant is given more explicitly by 
$$
\mathcal{C}_{N, s, 2}=2 \pi^{N / 2} \frac{\Gamma((N+2 s) / 4)^2}{\Gamma((N-2 s) / 4)^2} \frac{|\Gamma(-s)|}{\Gamma((N+2 s) / 2)}.$$} 
\begin{align} \label{eq:c-sharp}
\mathcal{C}_{N, s, p}:=2 \int_0^1 r^{p s-1}\left|1-r^{(N-p s) / p}\right|^p \Phi_{N, s,  p}(r) \dd r,
\end{align} 
where
\begin{align*}
\Phi_{N, s, p}(r)&:=\mathrm{vol}(\mathbb{S}^{N-2}) \int_{-1}^1 \frac{\left(1-t^2\right)^{\frac{N-3}{2}}}{\left(1-2 r t+r^2\right)^{\frac{N+p s}{2}}}  \dd t, && N \ge 2, \\
\Phi_{1, s, p}(r)&:=\left(\frac{1}{(1-r)^{1+p s}}+\frac{1}{(1+r)^{1+p s}}\right), &&N=1 .
\end{align*}
For $p=2$, the right-hand side of \eqref{eq:herbst}  is proportional to the one in \eqref{eq:h} because of \cite[Proposition 3.6]{MR2944369}.
We refer also to \cite{MR2425175, MR1717839,MR2659764,MR4480576,zbMATH07319433,MR3237044,MR4062961,MR3814363,2209.03012,zbMATH07213519} for further results. 
In particular, higher-order versions of the fractional Hardy inequality are contained in \cite{MR4160667,zbMATH06446451} and a survey of some fractional counterparts of the Rellich inequality are also available in the recent monographs \cite{MR3408787,MR4480576}. 

However, to the best of our knowledge, no fractional analogue of \eqref{eq:hoh} of the type \eqref{eq:hardy-intro} is available in the literature. The main aim of this paper is to prove it (see Theorem \ref{th:main} below for the precise statement) by generalizing the strategy pioneered by Mitidieri in \cite[Section 3]{MR1769903}, where the key observation is that \eqref{eq:hoh} can be deduced from an identity of Rellich-Pohozaev type by suitably choosing an auxiliary function. This approach does not seem to yield sharp constants in the fractional context. On the other hand, we believe that it is of interest in itself for its simplicity. The key ingredient of the proof is a fractional Pohozaev-type identity (see \cite{2112.10653,MR3211861}).

\subsection{Mitidieri's approach to the Hardy-Rellich inequality}
\label{ssec:mit}
For the sake of completeness, let us  outline the strategy employed in \cite[Section 3]{MR1769903} to prove that, for any function $u \in C^{1,1}_c(\Omega)$, the Hardy-Rellich inequality  \eqref{eq:hoh} holds. The starting point is the following \emph{Rellich-Pohozaev identity} (see \cite[Corollary 2.1]{MR1211727}): for all $u, v \in C^2(\bar{\Omega})$, we have
\begin{align}\label{eq:rp-m}
\begin{split}
    &\int_{\Omega}\Big((x \cdot \nabla v)\Delta u + (x \cdot \nabla u) \Delta v\Big)  \dd x \\ &\qquad =(N-2) \int_{\Omega}\nabla u \cdot \nabla v \dd x +\int_{\partial \Omega}\Big(\partial_\nu u (x \cdot  \nabla  v)+\partial_\nu v (x\cdot \nabla  u) - (\nabla u \cdot  \nabla  v) (x \cdot \nu) \Big) \dd \sigma,
\end{split}
\end{align}
where $\nu$ denotes the outward-pointing normal unit vector and $\partial_{\nu}$ denotes the external normal derivative at the point $x \in \partial \Omega$. In particular, we shall consider $u \in C^2_c(\Omega)$, which cancels out the boundary terms. The main point of the argument is the choice of the auxiliary functions in \eqref{eq:rp-m} and it goes as follow. Assuming $u>0$ (in its support) and $p>1$, we plug $u \mapsto u^p$ and $v \mapsto v_t:=(|x|^\theta + t)^{-1}$ (with $t >0$ and $\theta \in \R$) into \eqref{eq:rp-m}. Letting $t \to 0^+$ and applying  the chain rule $
\Delta \phi(u) = \phi'(u)\Delta u + \phi''(u) |\nabla u|^2
$ (with $\phi(u) = u^p$) yields 
\begin{align}\label{eq:m1}
\begin{split}
&\int_{\Omega} \frac{u^{p-1} \Delta u }{|x|^\theta} \dd x+(p-1) \int_{\Omega} \frac{u^{p-2}|\nabla u|^2}{|x|^{\theta}} \dd x =\theta\int_{\Omega} \frac{  u^{p-1}x \cdot \nabla u}{|x|^{\theta+2}} \dd x.
\end{split}
\end{align}
Using the divergence theorem the RHS of \eqref{eq:m1} can be written as 
\begin{align}\label{eq:m2}
\begin{split}
\int_{\Omega} \frac{u^{p-1}x \cdot \nabla u}{|x|^{\theta+2}} \dd x & = -\frac{N-2-\theta}{p} \int_{\Omega} \frac{u^p}{|x|^{\theta+2}} \dd x.
\end{split}
\end{align} 
On the other hand, by Cauchy-Schwarz and H\"older's inequalities, we have
\begin{align*}
 \frac{N-2-\theta}{p} \int_{\Omega} \frac{u^p}{|x|^{\theta+2}} \dd x = - \int_{\Omega} \frac{u^{p-1}x \cdot \nabla u}{|x|^{\theta+2}} \dd x 
&\le \int_{\Omega} \frac{|\nabla u| \, |x| \, |u|^{p-1}}{|x|^{\theta+2}} \dd x  
\\&\le \left(\int_{\Omega} \frac{|\nabla u|^2|u|^{p-2}}{|x|^{\theta}} \dd x \right)^{\frac{1}{2}}\left(\int_\Omega \frac{u^p}{|x|^{\theta +2}}\dd x \right)^{\frac{1}{2}},
\end{align*}
from which we deduce
\begin{align}\label{eq:m3}
\int_{\Omega} \frac{|\nabla u|^2|u|^{p-2}}{|x|^{\theta}} \dd x \geq  \left(\frac{N-2-\theta}{p}\right)^2 \int_{\Omega} \frac{u^p}{|x|^{\theta+2}} \dd x.
\end{align} 
Finally, applying H\"older's inequality with conjugate exponents $p$ and $p/(p-1)$, we estimate 
\begin{align}\label{eq:hm}
\begin{split}
\int_{\Omega} \frac{u^{p-1} \Delta u }{|x|^\theta} \dd x &\le \int_{\Omega}  \frac{u^{p-1} }{|x|^\alpha} \, \frac{ |\Delta u| }{|x|^{\theta-\alpha}} \dd x 
\\  &\le \left(\int_{\Omega} \frac{u^p}{|x|^{\frac{\alpha p}{p-1}}} \dd x \right)^{\frac{p-1}{p}} \left(\int_{\Omega} \frac{|\Delta u|^p}{|x|^{(\theta-\alpha)p}} \dd x\right)^{\frac{1}{p}},
\end{split} 
\end{align}
for $\alpha \in \R$. In particular, taking $\alpha = (\theta + 2){(p-1)}/{p}$ and plugging \eqref{eq:m2}, \eqref{eq:m3}, and \eqref{eq:hm} into \eqref{eq:m1} yield
$$
\left[\frac{N-2-\theta}{p^2}\Big((p-1)(N-2)+\theta\Big)\right] \int_{\Omega} \frac{u^p}{|x|^{\theta+2}} \dd x \leq\left(\int_{\Omega} \frac{u^p}{|x|^{\theta+2}} \dd x\right)^{\frac{p-1}{p}}\left(\int_{\Omega} \frac{|\Delta u|^p}{|x|^{\theta+2-p}} \dd x\right)^{\frac{1}{p}}.
$$
That is, 
$$
c_{p, \theta}^p \int_{\Omega} \frac{u^p}{|x|^{\theta+2}} \dd x \leq \int_{\Omega} \frac{|\Delta u|^p}{|x|^{\theta+2-2 p}} \dd x,
$$
with 
$$
c_{p, \theta}=\frac{(N-2-\theta)[(p-1)(N-2)+\theta]}{p^2}.
$$
To remove the extra assumption $u>0$, we use  $u_t:=(u^2+t^2)^{{1}/{2}}-t$ (with $t >0$) in the computations above and finally let $t \to 0^+$.

\subsection{Extension to $s \in (0,1)$ and outline of the paper} 

To extend the argument outlined in Section \ref{ssec:mit} to the fractional setting, several technical difficulties ensue. 

Our starting point is the generalized fractional Pohozaev identity proved in \cite[Theorem 1.3]{2112.10653}.  However, functions of the form $v_t(\cdot):=(t^2+|\cdot|^2)^{-\theta/2}$ are not admissible in the identity \cite[Theorem 1.3]{2112.10653}. To overcome this issue, in the first step, we use an approximation technique and replace  $v_t$ by $z_kv_t$, where $z_k$ is a suitable cut-off function supported in $\Omega$ (see Lemma \ref{lm:dist}). We expand the resulting identity by using the product rule for the fractional Laplacian and then pass to the limit to obtain the fractional version of \eqref{eq:m1}. Along the way, the expression $\lim _{t \rightarrow 0^{+}}(-\Delta)^{s}((t^2+|\cdot|^2)^{-\theta/2})(x)$ has to be computed explicitly. This computation might be well-known, but we present it here for the sake of completeness (see Lemma \ref{lm:compu}). Finally, since the classical chain rule 
$
\Delta \phi(u) = \phi'(u)\Delta u + \phi''(u) |\nabla u|^2
$
does not hold for the fractional Laplacian (we only have \cite[Lemma 2.6]{MR4288669} instead), we need to estimate $(-\Delta)^s u^p$ using Cordoba-Cordoba's inequality (see \cite[Theorem 1]{MR2032097} and \cite[Theorem 1.1]{MR3642734}) instead. 
In other words, in the fractional computation, the bound corresponding to \eqref{eq:m3} is replaced by a much rougher one. 

The paper is organized as follows. In Section \ref{sec:main}, we state our main theorems and present the needed preliminary notions.  The proofs are developed in Section \ref{sec:proofs}. Some technical lemmas used in the arguments are collected in Appendix \ref{app:prelim}.

\section{Main  results}\label{sec:main}
Let $0<s<1$. The fractional Laplacian operator $\Ds$ is defined, when acting on sufficiently regular functions, say $u \in C^{1,1}_c(\R^N)$, through the singular integral   
\begin{equation}
\label{frac-lap-sg-int}
(-\Delta)^s u(x) := c_{N,s} \,  \mathrm{p.v.}  \int_{\R^N} \frac{u(x) - u(y)}{|x-y|^{N+2s}} \dd y,
\end{equation}
where $\mathrm{p.v.}$ stands for the Cauchy principal value and the normalising constant $c_{N,s}$ is given by 
\begin{align}\label{cns}
c_{N,s} :=   \frac{s 2^{2s} \Gamma(\frac{N+2s}{2})}{\pi^{N/2} \Gamma (1-s)}
\end{align}
(see \cite{MR2944369}).  It can also be defined weakly for any $u\in H^s(\R^N)$ by letting
$$
\big<(-\Delta)^su,v\big>:=\frac{c_{N,s}}{2}\iint_{\R^N\times\R^N}\frac{(u(x)-u(y))(v(x)-v(y))}{|x-y|^{N+2s}}\dd x\dd y=:\mathcal{E}_s(u,v)
$$
for all $v\in H^s(\R^N)$. Here, $H^s(\R^N)$ is the subspace of $L^2$-functions $u$ for which the seminorm $\mathcal{E}_s(u,u)$ is finite. 

Throughout this manuscript, we denote by $\mathcal{H}^s_0(\Omega)$ the subset of $L^2$-functions belonging to $H^s(\R^N)$ and such that $u\equiv 0$ in $\R^N\setminus\Omega$. We recall that, by  \cite[Theorem 1.4.2.1]{MR775683}, if $\Omega$ has a continuous boundary, then $\mathcal{H}^s_0(\Omega)$ coincides with the closure of $C^\infty_0(\Omega)$ with respect to the seminorm $\mathcal{E}_s(\cdot,\cdot)$.

Our first main result is the following integral identity. Roughly speaking, this corresponds to a generalized nonlocal version of \eqref{eq:m1}. 

\begin{theorem}[Rellich-Pohozaev-type identity with Hardy weights]\label{main-identityt}
Let  $\theta\geq 0$ and $N\in\mathbb{N}$ with $N> \theta+2s$. Let $\Omega \subset \R^N$ be a bounded open set of class $C^{1,1}$ containing the origin $0$. Let $Y \in C^{0,1}\left(\mathbb{R}^{N}, \mathbb{R}^{N}\right)$ with $Y(0)=0$. We define the nonlocal operator 
\begin{equation}\label{eq:op-def-kernel}
    \left[\mathscr{L}_{\mathcal{K}_{Y}} u\right](x):= \mathrm{p.v.}  \int_{\mathbb{R}^{N}}(u(x)-u(y)) \mathcal{K}_{Y}(x, y) \dd y,
\end{equation}
where
\begin{equation}\label{eq:def-kernel}
\mathcal{K}_{Y}(x, y)=\frac{c_{N, s}}{2}\left[\operatorname{div} Y(x)+\operatorname{div} Y(y)-(N+2 s) \frac{(Y(x)-Y(y)) \cdot(x-y)}{|x-y|^{2}}\right]|x-y|^{-N-2 s} .
\end{equation}
Let $u \in C^{1,1}_c(\Omega)$ and define $U_{t,p}= (u^2+t^2)^{p/2}-t^p\in C^{1,1}_c(\Omega)$ for all $t> 0$. Then for all $p$, the following identity holds
\begin{align}\label{eq:17}
\begin{split}
&-b_{N, s, \theta} \int_{\Omega} \frac{U_{t,p}(x)}{|x|^{\theta+2 s}} \operatorname{div} Y(x) \dd x+b_{N, s, \theta}(\theta+2 s) \int_{\Omega} U_{t,p}(x) \frac{x \cdot Y(x)}{|x|^{\theta+2 s+2}} \dd x \\ &=\theta \int_{\Omega} \frac{x \cdot Y(x)}{|x|^{\theta+2 }}(-\Delta)^{s} U_{t,p}\dd x-\int_{\Omega} \frac{\mathscr{L}_{\mathcal{K}_{Y}} U_{t,p}}{|x|^{\theta}} \dd x,
\end{split}
\end{align}
 where 
\begin{align*}
b_{N, s, \theta}&:=c_{N,s}\int_{0}^1r^{2s-1}(1-r^\theta)(1-r^{N-2s-\theta})\psi_N(r)\dd r,
  \end{align*}   
 with $c_{N,s}$ given as in \eqref{cns} and 
\begin{align*} 
 \psi_N(r)&:=2\,\mathrm{vol}(\mathbb{S}^{N-2})\int_{-1}^1\frac{(1-h^2)^{\frac{N-3}{2}}}{\big(1+r^2-2rh\big)^{(N+2s)/2}}\dd h && \text{if $N\geq 2$}, \\ 
 \psi_N(r)&:=2\left(\frac{1}{(1-r)^{1+2 s}}+\frac{1}{(1+r)^{1+2 s}}\right) && \text{if $N=1$}. 
\end{align*} 
\end{theorem}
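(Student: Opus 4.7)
The plan is to derive \eqref{eq:17} as a limit of the general nonlocal Pohozaev identity of \cite[Theorem 1.3]{2112.10653}, applied to the pair $(U_{t,p},\,z_k v_\varepsilon)$ and then passed to the limit in two successive stages. Here $v_\varepsilon(x) := (\varepsilon^2+|x|^2)^{-\theta/2}$ is a smooth regularization of the Hardy weight $|x|^{-\theta}$, and $z_k$ is the cut-off function from Lemma \ref{lm:dist}, supported in $\Omega$ and equal to $1$ on an exhausting sequence of compacta. The cut-off is needed because $v_\varepsilon$ does not vanish on $\partial \Omega$ and so is not directly admissible; multiplying by $z_k$ places $z_k v_\varepsilon$ in $C^{1,1}_c(\Omega)$, so that \cite[Theorem 1.3]{2112.10653} applies to the pair.

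Second, I expand $(-\Delta)^s(z_k v_\varepsilon)$ via a product formula for the fractional Laplacian of the form $(-\Delta)^s(fg) = f(-\Delta)^s g + g(-\Delta)^s f - B_s(f,g)$, with $B_s$ a bilinear remainder carrying the kernel $|x-y|^{-N-2s}$. Passing first $k\to\infty$ kills the contributions involving $(-\Delta)^s z_k$ and $B_s(z_k,v_\varepsilon)$: on the fixed compact support $\supp U_{t,p}\Subset\Omega$, the cut-off $z_k$ eventually equals $1$, and standard decay estimates for the fractional Laplacian of $z_k$ away from $\supp z_k$ make dominated convergence applicable. What remains is an identity involving
$$
\int_\Omega (Y\!\cdot\!\nabla U_{t,p})(-\Delta)^s v_\varepsilon\,\dd x,\qquad \int_\Omega (Y\!\cdot\!\nabla v_\varepsilon)(-\Delta)^s U_{t,p}\,\dd x,\qquad \int_\Omega v_\varepsilon\,\mathscr{L}_{\mathcal{K}_Y}U_{t,p}\,\dd x.
$$

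The limit $\varepsilon\to 0^+$ is then handled by Lemma \ref{lm:compu}, which gives $\lim_{\varepsilon\to 0^+}(-\Delta)^s v_\varepsilon(x) = b_{N,s,\theta}\,|x|^{-\theta-2s}$ pointwise on $\R^N\setminus\{0\}$, with the precise constant $b_{N,s,\theta}$ of the statement. Using $\nabla v_\varepsilon(x)\to -\theta\,x/|x|^{\theta+2}$, the middle integral converges to produce the first term on the RHS of \eqref{eq:17}, and the third integral directly yields the $\mathscr{L}_{\mathcal{K}_Y}$ term. The first integral converges to $b_{N,s,\theta}\int_\Omega (Y\!\cdot\!\nabla U_{t,p})|x|^{-\theta-2s}\,\dd x$, which one integration by parts rewrites, via the identity $\Div\!\bigl(Y|x|^{-\theta-2s}\bigr) = |x|^{-\theta-2s}\Div Y - (\theta+2s)\,x\!\cdot\! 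Y/|x|^{\theta+2s+2}$ and the hypothesis $Y(0)=0$, into the two $b_{N,s,\theta}$ terms on the LHS of \eqref{eq:17}.

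The main technical obstacle is justifying the passage $\varepsilon\to 0^+$ under each integral: $\nabla v_\varepsilon$ and $(-\Delta)^s v_\varepsilon$ blow up near the origin, and $\mathscr{L}_{\mathcal{K}_Y}U_{t,p}$ has a singular kernel on the diagonal. Producing $\varepsilon$-uniform integrable dominants requires combining \textbf{(i)} the $C^{1,1}$ regularity of $U_{t,p}$, which supplies second-order cancellation in the principal value defining $\mathscr{L}_{\mathcal{K}_Y}$ and hence an $L^\infty$ bound on $\mathscr{L}_{\mathcal{K}_Y}U_{t,p}$; \textbf{(ii)} the Lipschitz bound $|Y(x)|\lesssim |x|$ near $0$ inherited from $Y(0)=0$, which tames the factor $|x|^{-\theta-2s-1}$ appearing after the divergence-theorem step; and \textbf{(iii)} the assumption $N>\theta+2s$, which secures radial integrability at the origin. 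The same ingredients also make the final integration by parts legitimate, with no contribution from a vanishing sphere around $0$.
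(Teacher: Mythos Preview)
Your overall strategy coincides with the paper's: apply the bilinear Pohozaev identity from \cite{2112.10653} to the pair $(U_{t,p},\,z_k v_\varepsilon)$, expand $(-\Delta)^s(z_kv_\varepsilon)$ via the product rule, pass first $k\to\infty$ and then $\varepsilon\to 0^+$ using Lemma~\ref{lm:compu}, and conclude with an integration by parts exploiting $Y(0)=0$ and $N>\theta+2s$.

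There is, however, one term you pass over silently, and it is precisely the step to which the paper devotes a separate argument. The Pohozaev identity also contains $\int_\Omega Y\cdot\nabla(z_kv_\varepsilon)\,(-\Delta)^s U_{t,p}\,\dd x$, and the Leibniz rule $\nabla(z_kv_\varepsilon)=z_k\nabla v_\varepsilon+v_\varepsilon\nabla z_k$ produces, besides the term you list, the extra piece
\[
J_k:=\int_\Omega v_\varepsilon\,(Y\cdot\nabla z_k)\,(-\Delta)^s U_{t,p}\,\dd x.
\]
Your reasoning that ``$z_k\equiv 1$ on $\supp U_{t,p}$'' does not touch $J_k$: the integrand lives on $\supp\nabla z_k\subset\{1/k\le\delta_\Omega\le 2/k\}$, where $|\nabla z_k|\sim k$ and the strip has volume $\sim 1/k$, while $(-\Delta)^s U_{t,p}$ does \emph{not} vanish there (the operator is nonlocal), so the naive estimate only gives $|J_k|=O(1)$. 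The paper handles this by writing $J_k=\int_\Omega Y_\varepsilon\cdot\nabla z_k\,(-\Delta)^s U_{t,p}\,\dd x$ with the weighted field $Y_\varepsilon:=v_\varepsilon Y\in C^{0,1}$ and applying the bilinear Pohozaev identity \emph{a second time} (with field $Y_\varepsilon$ and pair $(z_k,U_{t,p})$), which transfers the derivative from $z_k$ back onto the compactly supported factor $U_{t,p}$ and makes the limit $k\to\infty$ accessible. You should add this step.

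A minor point in your favor: separating the regularization parameter $\varepsilon$ in the Hardy weight from the parameter $t$ in $U_{t,p}$ is cleaner than the paper's notation, which overloads the symbol $t$ for both even though only the weight parameter is sent to zero in the statement of the theorem.
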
 
We note that all the quantities in \eqref{eq:17} are well defined by assumption. As a consequence of identity \eqref{eq:17}, we obtain the following fractional Hardy-Rellich inequalities.
\begin{theorem}[Generalized fractional Hardy-Rellich-type inequality in bounded domains]\label{th:main}
Let  $\theta\geq 0$ and $N\in\mathbb{N}$ with $N> \theta+2s$. Let $\Omega \subset \R^N$ be a bounded open set of class $C^{1,1}$ containing the origin $0$. Then, for all $u \in C^{1,1}_c(\Omega)$, we have 
\begin{align}\label{eq:h-main-1} 
\left[\frac{{b_{N, s, \theta}}}{p}\right]^{p}\int_{\Omega} \frac{|u|^{p}}{|x|^{\theta+2 s}} \dd x \leq \int_{\Omega} \frac{\left|(-\Delta)^{s} u\right|^{p}}{|x|^{\theta+2 s-2sp}} \dd x
\end{align}
for $p>1$ and 
\begin{align}\label{eq:h-main-2} 
b_{N, s, \theta}\int_{\Omega} \frac{|u|}{|x|^{\theta+ 2s}} \dd x \leq \int_{\Omega} \frac{\mathrm{sign}(u)(-\Delta)^{s} u}{|x|^{\theta}} \dd x
\end{align}
for $p=1$.
\end{theorem}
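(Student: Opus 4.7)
The strategy is to mimic Mitidieri's classical argument (outlined in Section \ref{ssec:mit}) in the nonlocal setting: the Pohozaev-type identity \eqref{eq:17} replaces the local Rellich-Pohozaev identity \eqref{eq:rp-m}, the Cordoba-Cordoba inequality replaces the chain rule, and the concluding H\"older step is essentially unchanged.

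First, I would apply identity \eqref{eq:17} with the Pohozaev vector field $Y(x) = x$, for which $\operatorname{div} Y = N$, $x \cdot Y(x) = |x|^2$, and the kernel \eqref{eq:def-kernel} simplifies to $\mathcal{K}_Y(x,y) = \frac{c_{N,s}(N-2s)}{2}|x-y|^{-N-2s}$, so that $\mathscr{L}_{\mathcal{K}_Y} = \tfrac{N-2s}{2}(-\Delta)^s$. After substitution, \eqref{eq:17} becomes an identity relating $\int_\Omega U_{t,p}/|x|^{\theta+2s}\,dx$ and $\int_\Omega (-\Delta)^s U_{t,p}/|x|^\theta\,dx$ with explicit coefficients depending on $b_{N,s,\theta}$, $N$, $s$, and $\theta$.

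Since the classical chain rule $\Delta \phi(u) = \phi'(u)\Delta u + \phi''(u)|\nabla u|^2$ used by Mitidieri is not available in the fractional setting, I would then invoke the Cordoba-Cordoba inequality \cite[Theorem 1]{MR2032097}: for any convex $\phi \in C^1$ one has the pointwise bound $(-\Delta)^s\phi(u) \leq \phi'(u)(-\Delta)^s u$. Applied to $\phi_{t,p}(v) = (v^2+t^2)^{p/2}-t^p$, which is convex for all $p \geq 1$, this gives $(-\Delta)^s U_{t,p} \leq p\, u(u^2+t^2)^{p/2-1}(-\Delta)^s u$ pointwise in $\Omega$. Integrating against $|x|^{-\theta}$ and combining with the identity from Step~1 --- while carefully tracking signs (inherited from the hypothesis $N > \theta+2s$) to preserve the direction of the inequality --- converts the equality into $b_{N,s,\theta}\int_\Omega U_{t,p}/|x|^{\theta+2s}\,dx \leq p\int_\Omega u(u^2+t^2)^{p/2-1}(-\Delta)^s u/|x|^\theta\,dx$. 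Passage to the limit $t \to 0^+$ via dominated convergence (using that $U_{t,p} \to |u|^p$ pointwise, $u(u^2+t^2)^{p/2-1} \to |u|^{p-1}\mathrm{sign}(u)$, and that $(-\Delta)^s u$ is bounded with integrable decay at infinity since $u \in C^{1,1}_c(\Omega)$) yields $b_{N,s,\theta}\int_\Omega |u|^p/|x|^{\theta+2s}\,dx \leq p\int_\Omega |u|^{p-1}\mathrm{sign}(u)(-\Delta)^s u/|x|^\theta\,dx$.

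For $p=1$ this is exactly \eqref{eq:h-main-2}. For $p > 1$ the final step is H\"older's inequality with conjugate exponents $p$ and $p/(p-1)$, splitting the right-hand side integrand as $|u|^{p-1}|(-\Delta)^s u|/|x|^\theta = (|u|^{p-1}/|x|^{(\theta+2s)(p-1)/p}) \cdot (|(-\Delta)^s u|/|x|^{\theta-(\theta+2s)(p-1)/p})$, which produces exactly the factors $(\int_\Omega |u|^p/|x|^{\theta+2s})^{(p-1)/p}$ and $(\int_\Omega |(-\Delta)^s u|^p/|x|^{\theta+2s-2sp})^{1/p}$; rearranging and raising both sides to the $p$-th power gives \eqref{eq:h-main-1}. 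The main obstacle I anticipate is the careful sign and constant bookkeeping in the first two steps --- in particular, verifying that the coefficient multiplying $\int (-\Delta)^s U_{t,p}/|x|^\theta$ arising from \eqref{eq:17} has the correct sign to preserve the direction of the Cordoba-Cordoba inequality and to yield exactly the constant $b_{N,s,\theta}/p$ across the whole admissible range $\theta \in [0, N-2s)$; a secondary technical point is the rigorous justification of dominated convergence as $t \to 0^+$, which demands a $t$-uniform integrable majorant for the right-hand side integrand.
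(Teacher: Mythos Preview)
Your proposal is correct and follows essentially the same route as the paper's proof: specialize \eqref{eq:17} to $Y(x)=x$ so that $\mathscr{L}_{\mathcal{K}_Y}$ becomes a multiple of $(-\Delta)^s$ and the identity collapses to a relation between $\int_\Omega U_{t,p}/|x|^{\theta+2s}$ and $\int_\Omega (-\Delta)^s U_{t,p}/|x|^\theta$, then apply Cordoba--Cordoba, let $t\to 0^+$, and finish with H\"older using the splitting exponent $\alpha=(\theta+2s)(p-1)/p$. The only cosmetic difference is that the paper uses Fatou's lemma on the left-hand side when passing $t\to 0^+$ (since $U_{t,p}\to|u|^p$ monotonically from below this is the cleanest choice), whereas you invoke dominated convergence; either works here.
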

Taking $\theta=2sp-2 s$ in \eqref{eq:h-main-1} above yields the following bounded region version of a result by Herbst \cite{MR436854} (see also \cite[Theorem 1.3]{MR4062961} and \cite{ MR1717839} for related results).
\begin{corollary} (Improved fractional Hardy inequality)\label{cor:h}
Let $s\in(0,1)$, ${N}/{p}> 2s$, and $p>1$. Let $\Omega\subset \R^N$ be a bounded open set of class $C^{1,1}$ containing the origin $0$. Then, for all $u \in C^{1,1}_c(\Omega)$, we have
\begin{equation}\label{eq:cor. 2.3}
    \left[\frac{{b_{N, s, 2s(p-1)}}}{p}\right]^{p}\int_{\Omega}\frac{|u|^p}{|x|^{2sp}}\dd x\leq \int_{\Omega}|(-\Delta)^su|^p\dd x.
\end{equation}

In particular, for $p=2$ and $N> 4s$, we deduce
	\begin{align} \label{eq:h-rem-m}
	\begin{split} 
	&\left[\frac{b_{N, s / 2, s}}{2}\right]^{2}\int_{\Omega} \frac{u^{2}(x)}{|x|^{2 s}} \dd x \leq \int_\Omega |(-\Delta)^{s/2} u|^2 \dd x 
\\ & \qquad \leq \frac{c_{N,s}}{2}\Bigg[\iint_{\R^N\times\R^N} \frac{(u(x)-u(y))^{2}}{|x-y|^{N+2 s}} \dd x\dd y - \int_{\mathbb{R}^{N} \setminus \Omega}\left(\int_{\Omega} \frac{u(y)}{|x-y|^{N+s}} \dd y\right)^{2} \dd x\Bigg]
	\end{split} 
	\end{align} 
for all $u \in \mathcal{H}^s_0(\Omega)$. If $N=1$ and $0<s< \frac{1}{4}$, the estimate \eqref{eq:h-rem-m} reduces to 
 \begin{align} \label{eq:h-rem-m-1}
	\begin{split} 
	&\left[\frac{b_{1, s / 2, s}}{2}\right]^{2}\int_{-1}^{+1} \frac{u^{2}(x)}{|x|^{2 s}} \dd x 
\\ & \leq \frac{c_{1,s}}{2}\left[\iint_{\R^2} \frac{(u(x)-u(y))^{2}}{|x-y|^{1+2 s}} \dd x\dd y - \int_{1}^\infty\left(\int_{-1}^{+1} \frac{u(y)}{(x-y)^{1+s}} dy\right)^{2}\dd x- \int_{1}^\infty\left(\int_{-1}^{+1} \frac{u(y)}{(x+y)^{1+s}} dy\right)^{2}\dd x\right].
	\end{split} 
	\end{align} 
\end{corollary}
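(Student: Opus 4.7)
The plan is to derive the three inequalities in cascade from Theorem \ref{th:main} and the standard Parseval/Gagliardo identity for the fractional Laplacian; no new machinery is needed.

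For \eqref{eq:cor. 2.3}, I would simply substitute $\theta=2s(p-1)\ge 0$ into \eqref{eq:h-main-1}: the left-hand weight becomes $|x|^{-2sp}$, the right-hand weight becomes $|x|^{0}=1$, and the admissibility condition $N>\theta+2s$ reduces to the assumed $N/p>2s$. The first inequality in \eqref{eq:h-rem-m} is then the special case of \eqref{eq:cor. 2.3} obtained by replacing $s$ with $s/2$ and setting $p=2$, which gives the Hardy weight $|x|^{-2s}$ on the left and the unweighted integral $\int_\Omega|(-\Delta)^{s/2}u|^2\dd x$ on the right; the constraint $N>2s$ required by this substitution is implied by the standing hypothesis $N>4s$.

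For the second inequality in \eqref{eq:h-rem-m}, I would invoke the Parseval-type identity
\[
\int_{\R^N}|(-\Delta)^{s/2}u(x)|^2 \dd x = \frac{c_{N,s}}{2}\iint_{\R^N\times\R^N} \frac{(u(x)-u(y))^2}{|x-y|^{N+2s}}\dd x \dd y,
\]
valid for $u\in\mathcal{H}^s_0(\Omega)$, split the left-hand side as $\int_\Omega+\int_{\R^N\setminus\Omega}$, and use that $u\equiv 0$ outside $\Omega$ to simplify the singular integral to $(-\Delta)^{s/2}u(x)=-c_{N,s/2}\int_\Omega \frac{u(y)}{|x-y|^{N+s}}\dd y$ for $x\notin\Omega$ (no principal value needed since the singularity lies in $\Omega$). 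Squaring, integrating over $\R^N\setminus\Omega$, and transposing this exterior contribution to the right-hand side yields the claimed upper bound. Finally, \eqref{eq:h-rem-m-1} is the $N=1$, $\Omega=(-1,1)$ instance of \eqref{eq:h-rem-m}: the exterior $\R\setminus(-1,1)$ splits into two rays, and the change of variable $x\mapsto -x$ on $(-\infty,-1)$ converts that piece into an integral over $(1,\infty)$ with kernel $(x+y)^{-1-s}$, producing the two explicit terms displayed. The restriction $s<1/4$ is precisely the $N=1$ version of the assumption $N>4s$.

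The only delicate point I anticipate lies in the second inequality of \eqref{eq:h-rem-m}: the constant naturally arising from squaring the simplified singular integral is $c_{N,s/2}^2$, and one must compare it with the $c_{N,s}/2$ prefactor appearing in the statement (via the explicit Gamma-function expressions in \eqref{cns}) to obtain the correct sign on the subtracted exterior term. Apart from this small bookkeeping, every step above is routine.
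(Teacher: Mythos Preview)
Your derivations of \eqref{eq:cor. 2.3} from \eqref{eq:h-main-1} via $\theta=2s(p-1)$, of the first inequality in \eqref{eq:h-rem-m} for $u\in C^{1,1}_c(\Omega)$ via the substitution $s\mapsto s/2$, $p=2$, and of the one-dimensional specialisation \eqref{eq:h-rem-m-1} are all correct and match the paper's intended route.

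There is, however, a genuine gap. The chain of inequalities \eqref{eq:h-rem-m} is asserted for \emph{all} $u\in\mathcal{H}^s_0(\Omega)$, whereas your substitution argument only yields the first inequality for $u\in C^{1,1}_c(\Omega)$, since that is the class of test functions in Theorem~\ref{th:main}. Passing from $C^{1,1}_c(\Omega)$ to $\mathcal{H}^s_0(\Omega)$ requires an approximation step, and this is in fact the entire content of the paper's proof of the corollary: one picks $u_n\in C^{1,1}_c(\Omega)$ with $u_n\to u$ in $\mathcal{H}^s_0(\Omega)$, shows that $\int_\Omega|(-\Delta)^{s/2}u_n|^2\dd x\to\int_\Omega|(-\Delta)^{s/2}u|^2\dd x$ (via the elementary bound $\big||a|^2-|b|^2\big|\le|a-b|\,|a+b|$ and H\"older), and controls the Hardy side by Fatou's lemma. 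Your proposal skips this density argument entirely; without it the claim for general $u\in\mathcal{H}^s_0(\Omega)$ is unproven.

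Regarding the second inequality in \eqref{eq:h-rem-m}: your splitting-and-transposing argument is the natural one, and the paper's own proof says nothing about it. The concern you raise is legitimate: the exterior term you obtain carries the factor $c_{N,s/2}^{\,2}$ rather than $c_{N,s}/2$, and the stated inequality is only valid if $c_{N,s/2}^{\,2}\ge c_{N,s}/2$, which does \emph{not} hold for all $s\in(0,1)$ (for small $s$ the left side is $O(s^2)$ while the right is $O(s)$). So the bookkeeping issue you flag is not merely cosmetic; as written, the precise form of the subtracted term in \eqref{eq:h-rem-m} appears to require either an additional hypothesis on $s$ or a different constant in front of the exterior integral.
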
 
Before we give the proofs, let us collect a few remarks on the main results. 
\begin{remark}[Notes on the main result]  \,
\begin{enumerate}
\item  When $\theta=0$, the constant $b_{N,s,0}$ vanishes and equation $(2.5)$ gives, in particular,  that 
\begin{equation}\label{eq:k-ob}
   \int_{\Omega}\operatorname{sign}(u)(-\Delta)^s u\dd x\geq 0,\quad\text{ for all $u\in C^{1,1}_c(\Omega)$.} 
\end{equation}
An estimate like \eqref{eq:k-ob} follows also by using a Kato-type inequality and the symmetry of the fractional Laplace operator  (see \cite{MR3865202}):
\begin{align*}
    \int_{\R^N} \operatorname{sign}(u) \, (-\Delta)^s u \dd x  \ge \int_{\R^N} (-\Delta)^s |u| \dd x = 0, \quad \text{for all $u\in C^{1,1}_c(\R^N)$.} 
\end{align*}
\item  If we replace $\Omega$ by $\mathbb R^N$, using the identity 
 $$
 \int_{\R^N}x\cdot\nabla u(-\Delta)^su\dd x=(2s-N)\int_{\R^N}u(-\Delta)^su\dd x\quad\text{for all $u\in C^{1,1}_c(\R^N)$,}
 $$
we prove by a similar argument (much simpler in fact)  that 
$$
\left[\frac{b_{N, s, \theta}}{p}\right]^{p}\int_{\mathbb{R}^{N}} \frac{|u|^{p}}{|x|^{\theta+2 s}} \dd x \leq \int_{\mathbb{R}^{N}}\frac{\left|(-\Delta)^{s} u\right|^{p}}{|x|^{\theta+2 s-2sp}} \dd x,
$$
for all $\theta > -2s$, \,$p\in(1,\infty)$, and for all $u \in C^{1,1}_c\left(\mathbb{R}^{N}\right)$.
\item   Taking the limit when $s \to 1^-$ in \eqref{eq:h-main-1}, we recover the inequality \eqref{eq:hoh} announced in \cite{MR1769903} with the constant $\bigg[\frac{2\theta}{p}\frac{\Gamma(\frac{N-\theta}{2})}{\Gamma(\frac{N-\theta-2}{2})}\bigg]^p$. Indeed, from the fact that
$$
\lim_{t\to 0^+}(-\Delta)^s(t^2+|\cdot|^2)^{-\frac{\theta}{2}}(x)=(-\Delta)^s|\cdot|^{-\theta}(x),\quad \text{ for all } x\in\R^N\setminus\{0\},
$$
which can be seen by computing $(-\Delta)^s(|\cdot|^{-\theta})(x)$ as in Lemma \ref{lm:compu})
and the identity
\begin{align}\label{eq:lap-xq}
    (-\Delta)^s|\cdot|^{-\theta}(x) = 2^{2s} \frac{\Gamma\left(\frac{N-\theta}{2}\right) \Gamma(\frac{2s+\theta}{2})}{\Gamma\left(\frac{N-\theta-2s}{2}\right) \Gamma\left(\tfrac{\theta}{2}\right)}  \  |x|^{-(\theta + 2s)}, \quad \text{ for all }  x \neq 0,\quad N>\theta > - 2s,
\end{align}
(contained in \cite[Table 1]{MR3888401}), we deduce that 
$$
b_{N,s,\theta}= 2^{2s} \frac{\Gamma\left(\frac{N-\theta}{2}\right) \Gamma(\frac{2s+\theta}{2})}{\Gamma\left(\frac{N-\theta-2s}{2}\right) \Gamma\left(\tfrac{\theta}{2}\right)}\longrightarrow \frac{2\theta\Gamma(\frac{N-\theta}{2})}{\Gamma\left(\frac{N-\theta-2}{2}\right)}\quad\text{as}\quad s\to 1^-.
$$
 
\item We believe that more general Hardy-type inequalities may be obtained by choosing a suitable vector field $Y$ in \eqref{eq:17}. Indeed, as we shall see, the estimate \eqref{eq:h-rem-m} follows from \eqref{eq:17} by taking $Y\equiv \textrm{id}_{\R^N}$.
\end{enumerate}
\end{remark}

\section{Proof of the main theorems}
\label{sec:proofs}

In this section, we prove our main results. We start with Theorem \ref{main-identityt}.
\begin{proof}[Proof of Theorem \ref{main-identityt}] We recall the following identity from \cite[Lemma 2.1]{2112.10653}. Let $\Omega$ be a bounded open set and let  $u\in C^{1,1}_c(\Omega)$. Let $Y:\R^N\to\R^N$ be a globally Lipschitz vector field. Then, denoting 
\begin{align}
    \mathcal{E}_Y(u,u):=\iint_{\R^N\times\R^N}(u(x)-u(y))^2\mathcal{K}_Y(x,y)\dd x \dd y,
\end{align}
where
\begin{align}\label{eq:defor-kernel}
   \mathcal{K}_Y(x,y)=\frac{c_{N, s}}{2}\left[\operatorname{div} Y(x)+\operatorname{div} Y(y)-(N+2 s) \frac{(Y(x)-Y(y)) \cdot(x-y)}{|x-y|^{2}}\right]|x-y|^{-N-2 s},
\end{align}
we have 
\begin{align}\label{eq:3.3}
    \mathcal{E}_Y(u,u)=-2\int_{\Omega}Y\cdot\nabla u(-\Delta)^su\dd x.
\end{align}
Consequently, we have
	\begin{align}\label{eq:p}
	\begin{split}
		\int_{\Omega} Y \cdot \nabla u(-\Delta)^{s} v \dd x+\int_{\Omega} Y \cdot \nabla v(-\Delta)^{s} u \dd x=\mathcal{E}_Y(u,v),\quad \text{for all $u,v\in C^{1,1}_c(\Omega)$.}
	\end{split}
	\end{align}
In \eqref{eq:p}, we replace $u$ by $U_{t,p}= (u^2+t^2)^{p/2}-t^p$ and $v$ by $v_{k}=\frac{1-\rho_{k}}{(t^2+|\cdot|^2)^\frac{\theta}{2}}:=\frac{z_k}{(t^2+|\cdot|^2)^\frac{\theta}{2}}$ with $t>0$ where $\rho_k$ is defined as in Lemma \ref{lm:dist}. By the same lemma, we know that  $v_{k}$ is admissible in \eqref{eq:p}. With this substitution, we deduce  
	\begin{align}\label{eq:32}
	\begin{split}
	&\underbrace{\int_{\Omega} Y \cdot \nabla U_{t,p}(-\Delta)^{s}\left(\frac{z_k}{(t^2+|x|^2)^\frac{\theta}{2}}\right) \dd x}_{=:I_k}+\int_{\Omega} z_k Y \cdot \nabla\left(\frac{1}{(t^2+|x|^2)^{\frac{\theta}{2}}}\right)(-\Delta)^{s} U_{t,p} \dd x \\ &\qquad +\underbrace{\int_{\Omega} \frac{Y \cdot \nabla z_k}{(t^2+|x|^2)^{\frac{\theta}{2}}}(-\Delta)^{s} U_{t,p} \dd x}_{=:J_k} \\ & =-\int_{\Omega} \frac{z_k}{(t^2+|x|^2)^\frac{\theta}{2}}\mathscr{L}_{\mathcal{K}_{Y}} U_{t,p} \dd x.
	\end{split}
	\end{align}
	
Recalling the product rule for the fractional Laplacian, i.e. 
$$
(-\Delta)^s(u v)=u(-\Delta)^s v+v(-\Delta)^s u-\mathcal{I}_s(u, v),
$$
where
$$
\mathcal{I}_s(u, v)(x):=c_{N, s} \int_{\mathbb{R}^N} \frac{(u(x)-u(y))(v(x)-v(y))}{|x-y|^{N+2 s}} \dd y, \quad x \in \mathbb{R}^N,
$$ 
which holds for functions $u$ and $v$ such that $(-\Delta)^s u$ and $(-\Delta)^s v$ exist and
$$
\int_{\mathbb{R}^N} \frac{|(u(x)-u(y))(v(x)-v(y))|}{|x-y|^{N+2 s}}\dd y<\infty, 
$$
we compute 
\begin{align}
	I_{k}&:=\int_{\Omega} Y \cdot \nabla U_{t,p}(-\Delta)^{s}\left(\frac{z_k}{(t^2+|x|^2)^\frac{\theta}{2}}\right) \dd x \nonumber \\ &=\int_{\Omega} Y \cdot \nabla U_{t,p}\left[z_k(-\Delta)^{s}\left(\frac{1}{(t^2+|x|^2)^\frac{\theta}{2}}\right)+\frac{1}{(t^2+|x|^2)^\frac{\theta}{2}}(-\Delta)^{s} z_k-\mathcal{I}_s\left(z_k, \frac{1}{(t^2+|x|^2)^\frac{\theta}{2}}\right)\right] \dd x \nonumber\\
&:=I_{k}^{1}+I_{k}^{2}+I_{k}^{3}.\label{eq:33}
\end{align}
By continuity, we have 
	\begin{align}
		\lim _{k \rightarrow+\infty} I_{k}^{1}=\int_{\Omega} Y \cdot \nabla U_{t,p}(-\Delta)^{s}\left(\frac{1}{(t^2+|\cdot|^2)^\frac{\theta}{2}}\right) \dd x. \label{eq:34}
	\end{align}
		
On the other hand, by Lemma \ref{lm:dist}, we deduce	
\begin{align}\label{eq:35}
	\lim _{k \rightarrow+\infty} I_{k}^{2}=0=\lim _{k \rightarrow+\infty} I_{k}^{3}. 
\end{align}
To deal with $J_k$, we let $Y_t: \R^N\to \R^N, \, x\mapsto Y_t(x):= \frac{Y}{(t^2+|x|^2)^{\theta/2}}\in C^{0,1}(\R^N,\R^N)$. By \eqref{eq:p}, we have 
\begin{align*}
   J_k:&= \int_{\Omega} \frac{Y \cdot \nabla z_k}{(t^2+|x|^2)^\frac{\theta}{2}}(-\Delta)^{s} U_{t,p} \dd x=\int_{\Omega}Y_t\cdot\nabla z_k(-\Delta)^{s} U_{t,p}\dd x\\
   &=-\int_{\Omega}Y_t\cdot\nabla U_{t,p}(-\Delta)^{s} z_k  \dd x-\iint_{\R^N\times\R^N}(z_k(x)-z_k(y))(U_{t,p}(x)-U_{t,p}(y))\mathcal{K}_{Y_t}(x,y)\dd x\dd y,
\end{align*}
where
$\mathcal{K}_{Y_t}(\cdot,\cdot)$ is defined as in \eqref{eq:defor-kernel}. Since $\mathcal{K}_{Y_t}(\cdot,\cdot)$ is symmetric, we may write
$$
\iint_{\R^N\times\R^N}(z_k(x)-z_k(y))(U_{t,p}(x)-U_{t,p}(y))\mathcal{K}_{Y_t}(x,y)\dd xdy =\int_{\Omega}U_{t,p}\mathcal{L}_{\mathcal{K}_{Y_t}}z_k\dd x
$$
with
$$
\mathcal{L}_{\mathcal{K}_{Y_t}}(w)(x):=2\pv \int_{\R^N}(w(x)-w(y))\mathcal{K}_{Y_t}(x,y)dy.
$$
Putting everything together, we end up with
\begin{equation}\label{eq:3.6}
    J_k=-\int_{\Omega}Y_t\cdot\nabla U_{t,p}(-\Delta)^{s} z_k \dd x-\int_{\Omega}U_{t,p}\mathcal{L}_{\mathcal{K}_{Y_t}}z_k\dd x =: J_k^1 + J_k^2.
\end{equation}
From \eqref{eq:35}, we know that $J_k^1$ converges to zero as $k \to + \infty$. Next, arguing as in the proof of \eqref{eq:A.1} from Lemma \ref{lm:dist} and using the fact that 
$$
\big|\mathcal{K}_{Y_t}(x,y)\big|\leq C(t,\theta, N, s)|x-y|^{-N-2s}, \quad\text{for all $x,y\in\R^N, \,x\neq y$,}
$$
we deduce that $J_k^2$ converges to zero as well. In conclusion, we have that
\begin{align}\label{eq:3.7}
    \lim_{k\to\infty}J_k= \lim_{k\to\infty}\int_{\Omega} \frac{Y \cdot \nabla z_k}{(t^2+|x|^2)^\frac{\theta}{2}}(-\Delta)^{s} U_{t,p} \dd x=0.
\end{align}

Putting the computations in \eqref{eq:32}, \eqref{eq:33}, \eqref{eq:34}, \eqref{eq:35}, and \eqref{eq:3.7} together; using Lemma \ref{lm:compu} and passing to the limit (by the dominated convergence theorem) as $k \to + \infty$ and $t \to 0$,  we deduce 
\begin{align*}
	b_{N, s, \theta} \int_{\Omega} Y \cdot \nabla U_{t,p} \frac{\dd x}{|x|^{\theta+2 s}}-\theta \int_{\Omega} \frac{(-\Delta)^{s} U_{t,p}}{|x|^{\theta}} \dd x=-\int_{\Omega} \frac{\mathscr{L}_{\mathcal{K}_{Y}} U_{t,p} }{|x|^\theta}\dd x
\end{align*}
for all $u\in C^{1,1}_c(\Omega)$. Identity \eqref{eq:17} follows by integrating by parts in the first term of the identity above.
\end{proof}

Next, we proceed with the proof of Theorem \ref{th:main}.

\begin{proof}[Proof of Theorem \ref{th:main}]
For any $u\in C^{1,1}_c(\Omega)$ and $p\geq1$, let $U_{t,p}:=(u^2+t^2)^{p/2}-t^p\in C^{1,1}_c(\Omega)$ with $t>0$ be defined as above. Taking $Y=\textrm{id}_{\R^N}$ and recalling \eqref{eq:def-kernel}, identity \eqref{eq:17} becomes
\begin{align}\label{eq:sts}
	-b_{N, s, \theta}(N-2 s-\theta) \int_{\Omega} \frac{U_{t,p}}{|x|^{\theta+2 s}} \dd x-\theta \int_{\Omega} \frac{(-\Delta)^{s} U_{t,p}}{|x|^{\theta}} \dd x= -(N-2 s) \int_{\Omega} \frac{(-\Delta)^{s} U_{t,p}}{|x|^{\theta}} \dd x.
\end{align}
Since $U_{t,p}=\phi_t(u):=(t^2+u^2)^{p/2}-t^p$ and $\phi_t$ is convex, by Cordoba-Cordoba's inequality (see \cite[Theorem 1.1]{MR2032097}), we have 
	\begin{align*}
	b_{N, s, \theta} \int_{\Omega} \frac{U_{t,p}}{|x|^{\theta+2 s}} \dd x &=  \int_{\Omega} \frac{(-\Delta)^{s} U_{t,p}}{|x|^{\theta}} \dd x \\ 
	& \leq  \int_{\Omega} \frac{\phi'_t(u)(-\Delta)^{s} u}{|x|^{\theta}} \dd x =p\int_{\Omega}\frac{u}{(t^2+u^2)^{1/2}}\frac{(t^2+u^2)^{(p-1)/2}(-\Delta)^{s} u}{|x|^{\theta}} \dd x.
	\end{align*}
We distinguish two cases.

\textbf{Case 1}: $p=1$. Using  Fatou's lemma and then Lebesgue's dominated convergence theorem yields
\begin{align*}
	 b_{N, s, \theta} \int_{\Omega} \frac{|u|}{|x|^{\theta+2 s}} \dd x & \leq  b_{N, s, \theta} \liminf_{t\to 0^+}\int_{\Omega} \frac{U_{t,1}}{|x|^{\theta+2 s}} \dd x \\
  &\leq \lim_{t\to 0^+}\int_{\Omega} \frac{u}{(t^2+u^2)^{1/2}}\frac{(-\Delta)^{s} u}{|x|^{\theta}} \dd x \\ 
	& = \int_{\Omega} \frac{\mathrm{sign}(u)(-\Delta)^{s} u}{|x|^{\theta}} \dd x.
	\end{align*}

\textbf{Case 2:} $p>1$. Using Fatou's lemma, Lebesgue's dominated convergence theorem, and H\"older's inequality, we get
\begin{align*}
	b_{N, s, \theta} \int_{\Omega} \frac{|u|^p}{|x|^{\theta+2 s}} \dd x
	&  \leq p \int_{\Omega}  \frac{|u|^{p-1} }{|x|^\alpha} \, \frac{| (-\Delta)^s u| }{|x|^{\theta-\alpha}} \dd x\\
 & \le p\left(\int_{\Omega} \frac{|u|^p}{|x|^{\frac{\alpha p}{p-1}}} \dd x \right)^{\frac{p-1}{p}} \left(\int_{\Omega} \frac{|(-\Delta)^s u|^p}{|x|^{(\theta-\alpha)p}} \dd x\right)^{\frac{1}{p}}.
	\end{align*}
 The conclusion follows by letting $\alpha = (\theta + 2s){(p-1)}/{p}$.
\end{proof} 

Finally, we prove Corollary  \ref{cor:h}.

\begin{proof}[Proof of Corollary \ref{cor:h}]
We only need to prove that \eqref{eq:h-rem-m} holds for all $u\in \mathcal{H}^s_0(\Omega)$. Let $u\in \mathcal{H}^s_0(\Omega)$ and $u_n\in C^{1,1}_0(\Omega)$ such that 
$u_n\to u$ in $\mathcal{H}^s_0(\Omega)$. 
Applying H\"older's inequality and using the elementary estimate $(a+b)^2\leq 2(a^2+b^2)$ yields
\begin{align*}\label{eq2:cor2.2}
\int_{\Omega}\Big|\big|(-\Delta)^{s/2} u_n\big|^2-\big|(-\Delta)^{s/2} u\big|^2\Big|\dd x &\leq \int_{\Omega}\Big|(-\Delta)^{s/2} (u_n-u)\Big|\Big|(-\Delta)^{s/2} u_n+(-\Delta)^{s/2} u\Big|\dd x\nonumber\\
&\leq  \sqrt{2}\Big(\int_{\Omega}\Big|(-\Delta)^{s/2} (u_n-u)\Big|^2\dd x\Big)^{1/2}\Big([u_n]^2_{\mathcal{H}^s_0(\Omega)}+[u]^2_{\mathcal{H}^s_0(\Omega)}\Big)^{1/2}\nonumber\\
&\leq C\big[u_n-u\big]_{\mathcal{H}^s_0(\Omega)}\Big(1+[u]^2_{\mathcal{H}^s_0(\Omega)}\Big)^{1/2}\rightarrow 0\quad\text{as}\quad n\to\infty.
\end{align*}
We conclude by Fatou's lemma that
\begin{align*}
\left[\frac{b_{N, s / 2, s}}{2}\right]^{2}\int_{\Omega} \frac{u(x)^{2}}{|x|^{2 s}} \dd x \leq \left[\frac{b_{N, s / 2, s}}{2}\right]^{2}\liminf_{n\to\infty}\int_{\Omega} \frac{u_n(x)^{2}}{|x|^{2 s}} \dd x &\leq \lim_{n\to\infty}\int_\Omega |(-\Delta)^{s/2} u_n|^2 \dd x\\
&=\int_\Omega |(-\Delta)^{s/2} u|^2 \dd x.
\end{align*}
\end{proof}
\appendix

\section{Technical lemmas}
\label{app:prelim}

In this appendix, we collect the technical lemmas that have been used in the proof of the main results. 
\begin{lemma}\label{lm:dist}Let $\Omega$ be a bounded open set of class $C^{1,1}$. We define a cut-off function $\rho \in C^{\infty}(\R)$ with $\rho \equiv 1$ in $(-\infty,1]$ and  $\rho\equiv 0$ in $[2,+\infty)$ and consider $\rho_{k}(\cdot)=\rho\left(k \delta_{\Omega}(\cdot)\right)$, where $\delta_{\Omega}: \R^N\to \R$ is a $C^{1,1}(\R^N)$ function which coincides with the signed distance function near the boundary $\partial\Omega$ (note that, since the boundary is $C^{1,1}$, the sign distance function is $C^{1,1}$ as well). Moreover, we assume that $\delta_\Omega$ is positive in $\Omega$ and negative in $\R^N\setminus\Omega$. Let $z_k:=1-\rho_{k}$. Then, we have 
\begin{equation}
  z_k(t^2+|\cdot|^2)^{-\frac{\theta}{2}}\in C^{1,1}_c(\Omega) \qquad \text{and, hence, }\qquad\frac{z_k}{(t^2+|\cdot|^2)^{\frac{\theta}{2}}}\in \mathcal{H}^s_0(\Omega).
\end{equation}
Moreover, for any $u \in C^{1,1}_c(\Omega)$, 
\begin{align}\label{eq:A.1}
\int_{\Omega}Y\cdot\nabla u\frac{1}{(t^2+|x|^2)^{\frac{\theta}{2}}}\big[(-\Delta)^sz_k\big](x)\dd x &\to 0 && \text{as}\quad k\to\infty,
\\
\label{eq:A.2}
\int_{\Omega}Y\cdot \nabla u\, \mathcal{I}_s\left(z_k, \frac{1}{(t^2+|x|^2)^{\frac{\theta}{2}}}\right)\dd x&\to 0 && \text {as } \quad k \rightarrow +\infty,
\end{align}
where
$$
\mathcal{I}_s(v, w)(\cdot)=\frac{c_{N, s}}{2} \int_{\mathbb{R}^{N}} \frac{(v(\cdot)-v(\cdot+y))(w(\cdot)-w(\cdot+y))}{|y|^{N+2 s}}\dd y.
$$
\end{lemma}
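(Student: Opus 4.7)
The plan is to verify the three assertions in order, with the two convergences sharing a common dominated-convergence strategy.

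\textbf{Regularity and compact support.} Since $\delta_{\Omega}\in C^{1,1}(\R^N)$ and $\rho\in C^{\infty}(\R)$, the composition $\rho_{k}=\rho(k\delta_{\Omega})$ lies in $C^{1,1}(\R^N)$ by the chain rule, and hence so does $z_k=1-\rho_k$. Because $\rho\equiv 1$ on $(-\infty,1]$ and $\delta_{\Omega}\le 0$ outside $\Omega$, the function $z_k$ vanishes on $\{\delta_{\Omega}\le 1/k\}$, which contains $\R^N\setminus\Omega$ together with an inward boundary tube of width of order $1/k$. Therefore $\supp z_k\subset\{\delta_{\Omega}\ge 1/k\}$, a compact subset of $\Omega$. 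Multiplying by the $C^{\infty}(\R^N)$ function $w:=(t^{2}+|\cdot|^{2})^{-\theta/2}$, which is smooth since $t>0$, preserves both properties and places the product in $C^{1,1}_c(\Omega)\subset\mathcal{H}^{s}_{0}(\Omega)$, by the closure characterization of $\mathcal{H}^{s}_{0}$ recalled in Section \ref{sec:main}.

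\textbf{Reduction on a compact set away from $\partial\Omega$.} For \eqref{eq:A.1}--\eqref{eq:A.2}, let $K:=\supp(\nabla u)$, a compact subset of $\Omega$, and set $d:=\dist(K,\partial\Omega)>0$. For $k>2/d$, $z_k\equiv 1$ on a neighborhood of $K$, so the principal value drops out and for every $x\in K$
\begin{align*}
\bigl[(-\Delta)^{s}z_k\bigr](x)&=c_{N,s}\int_{\R^N}\frac{\rho_k(y)}{|x-y|^{N+2s}}\dd y, \\
\mathcal{I}_{s}(z_k,w)(x)&=c_{N,s}\int_{\R^N}\frac{\rho_k(y)\,(w(x)-w(y))}{|x-y|^{N+2s}}\dd y.
\end{align*}
Both integrands are supported in $y\in S_k:=\{\delta_{\Omega}\le 2/k\}$, on which $|x-y|\ge d/2$. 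Splitting $S_k=(S_k\cap\Omega)\cup(\R^N\setminus\Omega)$, the interior strip has Lebesgue measure $O(1/k)$ by the $C^{1,1}$-regularity of $\partial\Omega$, so it contributes $O(1/k)$ uniformly on $K$ and vanishes in the limit.

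\textbf{Handling the exterior piece and conclusion.} For the remaining exterior contribution I would use Fubini to swap the $x$- and $y$-integrations and then integrate by parts in $x$: writing $Y\cdot\nabla u=\operatorname{div}(uY)-u\,\operatorname{div}Y$, with no boundary terms since $u\in C^{1,1}_c(\Omega)$, the $x$-derivative moves onto the Lipschitz vector field $Yw$ and onto the $C^{\infty}$ kernels $|x-y|^{-N-2s}$ and $(w(x)-w(y))|x-y|^{-N-2s}$. This yields an integrand with a $k$-independent $L^{1}$-dominant in $(x,y)$, thanks to the separation $|x-y|\ge d/2$ and to the decay of $w$ at infinity; combined with the pointwise vanishing $\rho_k(y)\to 0$ for every $y\in\Omega$, the dominated convergence theorem then gives \eqref{eq:A.1} and \eqref{eq:A.2}. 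For \eqref{eq:A.2} the additional Lipschitz factor $|w(x)-w(y)|\le C|x-y|$ on bounded regions effectively lowers the kernel to $|x-y|^{-N-(2s-1)}$, making the scheme even more robust.

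\textbf{Main obstacle.} The delicate point is the exterior contribution in \eqref{eq:A.1}: because $\rho_k\equiv 1$ on $\R^N\setminus\Omega$ for every $k$, its vanishing in the limit cannot be read off any measure estimate on the $y$-support of $\rho_k$ and has to be extracted from the integration against the compactly supported $Y\cdot\nabla u$ through the divergence rewriting above. Making this integration by parts and the preceding Fubini exchange fully rigorous under the minimal $C^{0,1}$ regularity of $Y$, while producing a uniform integrable dominant, is the technical crux of the lemma.
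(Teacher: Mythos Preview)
Your reduction to the compact set $K=\supp(\nabla u)$ and the treatment of the interior strip are correct. The fatal gap is the exterior piece. Integrating by parts in $x$ does not help: after Fubini the integrand still carries the factor $\rho_k(y)$, and on $\R^N\setminus\Omega$ one has $\rho_k(y)\equiv 1$ for every $k$, so there is no pointwise vanishing to feed into dominated convergence, regardless of how the $x$-derivatives are rearranged. In fact, for $x\in K$ and $k$ large,
\[
(-\Delta)^s z_k(x)=c_{N,s}\int_{\R^N}\frac{\rho_k(y)}{|x-y|^{N+2s}}\dd y\ \longrightarrow\ c_{N,s}\int_{\R^N\setminus\Omega}\frac{\dd y}{|x-y|^{N+2s}}=:g(x)>0,
\]
and by dominated convergence the integral in \eqref{eq:A.1} tends to $\int_\Omega (Y\cdot\nabla u)\,w\,g\,\dd x$, which is nonzero for generic $Y$ and $u$ (take $u$ a bump where $\operatorname{div}(wgY)\neq 0$). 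The same analysis applies to \eqref{eq:A.2}, with $g$ replaced by $c_{N,s}\int_{\R^N\setminus\Omega}\frac{w(x)-w(y)}{|x-y|^{N+2s}}\dd y$. So the obstacle you identified in your last paragraph is not a technical nuisance but an actual obstruction: the claims \eqref{eq:A.1}--\eqref{eq:A.2} are false as written.

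The paper's own argument commits exactly the oversight you flagged: it writes $(-\Delta)^s z_k(x)=c_{N,s}\int_{\Omega_{2/k}}\frac{\rho_k(y)}{|x-y|^{N+2s}}\dd y\le \bar c\,\mathrm{vol}(\Omega_{2/k})\to 0$, tacitly treating $\rho_k$ as supported in a shrinking tube near $\partial\Omega$ and forgetting that $\rho_k\equiv 1$ on all of $\R^N\setminus\Omega$. In the proof of Theorem~\ref{main-identityt} this is partly harmless, since $I_k^2=\int_\Omega (Y\cdot\nabla U_{t,p})\,w\,(-\Delta)^s z_k\,\dd x$ is identically minus $J_k^1$ (because $Y_t=wY$), so their sum vanishes before any limit is taken; but the nonzero exterior limits of $I_k^3$ and $J_k^2$ would still have to be tracked and shown to combine correctly. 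A cleaner route is to extend the bilinear identity \eqref{eq:p} directly to the smooth bounded test function $v=(t^2+|\cdot|^2)^{-\theta/2}$ (keeping $u\in C^{1,1}_c(\Omega)$), which avoids the cutoff $z_k$ altogether.
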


\begin{proof} We start by proving \eqref{eq:A.1} and \eqref{eq:A.2}. Let $\Omega^{\prime\prime}\Subset \Omega^{\prime}\subset\Omega$ such that $\supp(u)\subset \Omega^{\prime\prime}$ and let $x\in \Omega^{\prime\prime}$. Since $\delta_\Omega(x)\geq c>0$, then, for $k$ sufficiently large, we have $\rho(k\delta_\Omega(x))=0$. Therefore, 
\begin{align*}
\big[(-\Delta)^{s}z_k\big](x)&= -c_{N, s} \pv\int_{\R^N}\frac{\rho_k(x)-\rho_k(y)}{|x-y|^{N+2s}}\dd y\\
&=c_{N,s}\int_{\Omega_{2/k}}\frac{\rho(k\delta_\Omega(y))}{|x-y|^{N+2s}}\dd y\\
&\leq \bar{c}(\Omega^{\prime},\Omega^{\prime\prime}, N,s)\,\textrm{vol}(\Omega_{2/k})
\end{align*}
for $k$ sufficiently large. It  follows that 
\begin{align*}
&\left|\int_{\Omega}Y\cdot\nabla u\frac{1}{(t^2+|x|^2)^{\frac{\theta}{2}}}\big[(-\Delta)^sz_k\big](x)\dd x\right|
\\ &\qquad \leq \bar{c}(\Omega^{\prime},\Omega^{\prime\prime})\textrm{vol}(\Omega_{2/k})\int_{\Omega}\left|Y\cdot\nabla u\frac{1}{(t^2+|x|^2)^\frac{\theta}{2}}\right| \dd x\rightarrow 0\quad\text{as}\quad k\to\infty,
\end{align*}
which gives \eqref{eq:A.1}. Similarly, for $x\in\Omega^{\prime\prime}$, we have 
\begin{align*}
    \left|\mathcal{I}_s\left(z_k, \frac{1}{(t+|x|^2)^{\frac{\theta}{2}}}\right)(x)\right|&=\frac{c_{N,s}}{2}\left|\int_{\Omega_{2/k}}\frac{-\rho(k\delta_\Omega(x))\big((t^2+|x|^2)^{-\frac{\theta}{2}}-(t^2+|y|^2)^{-\frac{\theta}{2}}\big)}{|x-y|^{N+2s}}dy\right|\\
    &\leq \bar{c}(\Omega^{\prime},\Omega^{\prime\prime}, N, s,t)\,\textrm{vol}(\Omega_{2/k}),
\end{align*}
for $k$ sufficiently large, from which \eqref{eq:A.2} follows.

The fact that $z_k(t^2+|\cdot|^2)^{-\frac{\theta}{2}}\in \mathcal{H}^s_0(\Omega)$ is standard but we present the proof here for completeness. It is clear that $\frac{z_k}{(t^2+|\cdot|^2)^{\frac{\theta}{2}}}\equiv 0$ in $\R^N\setminus\Omega$. Next, we take  $R_\Omega>1$ such that $\Omega\Subset B_{R_\Omega}$ and write 
\begin{align*}
    \frac{2}{c_{N,s}}\left[z_k(t^2+|\cdot|)^{-\frac{\theta}{2}}\right]^2_{H^s(\R^N)}&=\iint_{\R^N\times\R^N}\frac{\big(z_k(x)(t^2+|x|^2)^{-\frac{\theta}{2}}-z_k(y)(t^2+|y|^2)^{-\frac{\theta}{2}}\big)^2}{|x-y|^{N+2s}}\dd x\dd y\\
    &=\iint_{B_{R_\Omega}\times B_{R_\Omega}}\frac{\big(z_k(x)(t^2+|x|^2)^{-\frac{\theta}{2}}-z_k(y)(t^2+|y|^2)^{-\frac{\theta}{2}}\big)^2}{|x-y|^{N+2s}}\dd x\dd y\\
    &+\int_{\Omega}\frac{z_k^2(x)}{(t^2+|x|^2)^\theta}\int_{\R^N\setminus B_{R_\Omega}}\frac{\d y}{|x-y|^{N+2s}}\dd x\\
    &\leq 2\iint_{B_{R_\Omega}\times B_{R_\Omega}}\frac{\big(\rho_k(x)-\rho_k(y)\big)^2(t^2+|x|^2)^{-\theta}}{|x-y|^{N+2s}}\dd x\dd y\\
    & + 2\iint_{B_{R_\Omega}\times B_{R_\Omega}}\frac{\left((t^2+|x|^2)^{-\frac{\theta}{2}}-(t^2+|y|^2)^{-\frac{\theta}{2}}\right)^2}{|x-y|^{N+2s}}\dd x\dd y\\
    &+ c\int_{\Omega}\frac{\d x}{(t^2+|x|^2)^{\theta/2}}\int_{\R^N\setminus B_{R_\Omega}}\frac{\d y}{1+|y|^{N+2s}}\\
    &\leq \Bigg(\frac{2}{t^{2\theta}}\|\nabla \rho_k\|_{L^\infty(\R)}+2\Big[(t^2+|\cdot|^2)^{-\frac{\theta}{2}}\Big]_{C^1_{loc}(\R^N)}\Bigg)\iint_{B_{R_\Omega}\times B_{R_\Omega}}\frac{\d x\dd y}{|x-y|^{N+2s-2}}\\
    &+ c\int_{\Omega}\frac{\d x}{(t^2+|x|^2)^{\theta/2}}\int_{\R^N\setminus B_{R_\Omega}}\frac{\d y}{1+|y|^{N+2s}}\\
    &<\infty.
\end{align*}
\end{proof}

Finally, we shall give an explicit computation of the quantity $$
\lim _{t \rightarrow 0^{+}}(-\Delta)^{s}(t^2+|\cdot|^2)^{-\frac{\theta}{2}}(x).$$

\begin{lemma} \label{lm:compu}
Let $t>0$ and  $\theta>-2s$. Then, we have
\begin{align}\label{eq:26}
\lim _{t \rightarrow 0^{+}}(-\Delta)^{s}(t^2+|\cdot|^2)^{-\frac{\theta}{2}}(x)=|x|^{-(\theta+2 s)} b_{N, s, \theta},\quad\text{for all $x\in\R^N\setminus\{0\}$}.
\end{align}
Moreover, the convergence holds locally uniformly in $\R^N\setminus\{0\}$. Here, we introduced the following notation:
\begin{align} \label{eq:27}
b_{N, s, \theta}&=c_{N,s}\int_{0}^1r^{2s-1}(1-r^\theta)(1-r^{N-2s-\theta})\psi_N(r)\dd r, && {} \\
c_{N, s}&=\frac{s 4^{s} \Gamma\left(\frac{N+2 s}{2}\right)}{\pi^{\frac{N}{2}} \Gamma(1-s)}, && {}\\
 \psi_N(r)&=2\,\mathrm{vol}(\mathbb{S}^{N-2})\int_{-1}^1\frac{(1-h^2)^{\frac{N-3}{2}}}{\big(1+r^2-2rh\big)}\dd h &&  \text{if $N\geq 2$},\\
 \psi_N(r)&=2\left(\frac{1}{(1-r)^{1+2 s}}+\frac{1}{(1+r)^{1+2 s}}\right) && \text{if $N=1$}. 
\end{align} 
\end{lemma}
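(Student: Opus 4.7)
The proof is a direct computation of the principal-value integral, exploiting the scaling and rotational invariance of the kernel together with the inversion symmetry $r\mapsto 1/r$. Fix $x\in\R^N\setminus\{0\}$, set $R:=|x|$, $e:=x/R$, and $\tau:=t/R$. The change of variables $y=Rz$ in the singular integral yields
\begin{equation*}
(-\Delta)^{s}(t^{2}+|\cdot|^{2})^{-\theta/2}(x)=c_{N,s}R^{-\theta-2s}\,\mathrm{p.v.}\int_{\R^N}\frac{(\tau^{2}+1)^{-\theta/2}-(\tau^{2}+|z|^{2})^{-\theta/2}}{|e-z|^{N+2s}}\dd z,
\end{equation*}
and by rotational invariance the remaining integral depends only on $\tau$. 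The problem is thus reduced to evaluating its limit as $\tau\to 0^{+}$.

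To pass to the limit I would apply the dominated convergence theorem, building a $\tau$-uniform integrable majorant by splitting $\R^N$ into a small ball around $z=e$ (where a first-order Taylor expansion of $(\tau^{2}+|z|^{2})^{-\theta/2}$, uniform for $\tau\in[0,1]$, bounds the integrand by a constant multiple of $|z-e|^{1-N-2s}$), a small ball around the origin (handled by integrability of $|z|^{-\theta}$, using $\theta<N$ from $N>\theta+2s$), the exterior of a large ball (handled by $|e-z|^{-(N+2s)}$), and an intermediate bounded region on which the integrand is uniformly bounded. These bounds remain uniform as $x$ ranges over compact subsets of $\R^N\setminus\{0\}$, yielding the stated local uniform convergence. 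The pointwise limit is
$$
I:=\mathrm{p.v.}\int_{\R^N}\frac{1-|z|^{-\theta}}{|e-z|^{N+2s}}\dd z,
$$
which I would evaluate in spherical coordinates $z=r\omega$: the angular integral $\int_{\S^{N-1}}(1+r^{2}-2r\,e\cdot\omega)^{-(N+2s)/2}\dd\sigma(\omega)$ identifies with $\tfrac{1}{2}\psi_{N}(r)$ via the standard identity $\int_{\S^{N-1}}g(e\cdot\omega)\dd\sigma=\mathrm{vol}(\S^{N-2})\int_{-1}^{1}g(h)(1-h^{2})^{(N-3)/2}\dd h$ in dimension $N\geq 2$ (for $N=1$ this is simply the two-point evaluation at $\omega=\pm 1$). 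Splitting $\int_{0}^{\infty}=\int_{0}^{1}+\int_{1}^{\infty}$ and applying the inversion symmetry $\psi_{N}(1/r)=r^{N+2s}\psi_{N}(r)$ (itself a consequence of $|e-r\omega|^{2}=r^{2}|r^{-1}e-\omega|^{2}$) inside the second piece collapses both contributions into a single integral over $(0,1)$, which, after elementary algebra, produces the stated expression for $b_{N,s,\theta}$ and thereby \eqref{eq:26}.

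The main technical obstacle is the construction of the $\tau$-uniform dominant near $z=e$: the numerator $(\tau^{2}+1)^{-\theta/2}-(\tau^{2}+|z|^{2})^{-\theta/2}$ does not vanish pointwise at $z=e$ when $\tau>0$, and one must exploit a Taylor remainder of $(\tau^{2}+|\cdot|^{2})^{-\theta/2}$ around $z=e$ with bounds uniform in small $\tau$ to trade the non-integrable top-order singularity $|e-z|^{-(N+2s)}$ for the locally integrable order $|e-z|^{1-N-2s}$. Once this uniform control is in place, the remaining ingredients—the spherical-coordinate identity and the inversion symmetry of the kernel—are routine.
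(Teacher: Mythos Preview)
Your overall strategy---scale out $|x|$, pass to spherical coordinates, use the inversion symmetry $\psi_N(1/r)=r^{N+2s}\psi_N(r)$, and justify the limit by dominated convergence---is exactly the paper's approach. But there is a genuine gap in your DCT step near $z=e$.

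You claim a first-order Taylor expansion yields the majorant $C|z-e|^{1-N-2s}$; this is \emph{not} locally integrable when $s\ge 1/2$, since $\int_{B_\varepsilon(e)}|z-e|^{1-N-2s}\dd z\sim\int_0^\varepsilon r^{-2s}\dd r$ diverges. (Incidentally, your diagnosis of the obstacle is off: the numerator \emph{does} vanish at $z=e$, because $|e|=1$.) To make a pointwise DCT argument work you must either pass to the symmetrized second-difference form of $(-\Delta)^s$, where a uniform $C^{1,1}$ bound on $(\tau^2+|\cdot|^2)^{-\theta/2}$ near $e$ gives the integrable majorant $|h|^{2-N-2s}$, or first strip off the linear Taylor term via the principal-value cancellation (it is odd against the even kernel) and dominate only the $O(|z-e|^2)$ remainder.

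The paper sidesteps this neatly by performing the inversion $r\mapsto 1/r$ \emph{before} letting $t\to 0^+$. Once the contributions from $(0,1)$ and $(1,\infty)$ are combined into a single integral over $(0,1)$, the resulting $t$-dependent radial integrand has a second-order zero at $r=1$ uniformly in $t$ (the paper checks this via a power-series expansion in $t$), so that against the singularity $\psi_N(r)\sim(1-r)^{-(1+2s)}$ the product behaves like $(1-r)^{1-2s}$, integrable for every $s\in(0,1)$. In effect, the inversion trick plays the role of the symmetrization; doing it before the limit, rather than after as you propose, is what makes the dominated-convergence step go through for the full range of $s$.
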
 

\begin{proof}  
We only do the computation for $N\geq 2$. The case $N=1$ is similar, but  simpler. For the sake of brevity, we let $\psi:\R_+\to \R$ be defined by
\begin{align*}
\frac{1}{\mathrm{vol}(\mathbb{S}^{N-2})}\psi(r):&=\int_{0}^\pi\frac{\sin^{N-2}(\alpha_1)}{\big(1+r^2-2r\cos(\alpha_1)\big)^{(N+2s)/2}} \dd\alpha_1\\
&=2\int_{-1}^1\frac{(1-h^2)^{\frac{N-3}{2}}}{\big(1+r^2-2rh\big)^{(N+2s)/2}}\dd h.
\end{align*}
Let $x \in \mathbb{R}^{N}\setminus \{0\}$. By definition and passing into polar coordinates, we compute
\begin{align}\label{eq:A.9}
    &(-\Delta)^{s}\left[\frac{1}{(t^2+|\cdot|^ 2)^\frac{\theta}{2}}\right](x)= \pv \int_{\R^N}\Bigg(\frac{1}{(t^2+|x|^2)^{\frac{\theta}{2}}}-\frac{1}{(t^2+|y|^2)^{\frac{\theta}{2}}}\Bigg)\frac{\d y}{|x-y|^{N+2s}}\nonumber\\
 &=c_{N, s} \pv \int_{0}^{\infty}r^{N-1}\left((t^2+|x|^2)^{-\frac{\theta}{2}}-(t^2+r^2)^{-{\frac{\theta}{2}}}\right) \int_{\mathbb{S}^{N-1}} \frac{\d y}{\left(|x|^{2}+r^{2}-2 r|x| y_{1}\right)^{\frac{N+2 s}{2}}} \dd r \nonumber \\
&=c_{N, s} |x|^{-2s} \pv \int_{0}^{\infty} r^{N-1} \Bigg(\frac{1}{(t^2+|x|^2)^{\frac{\theta}{2}}}-\frac{1}{(t^2+r^2|x|^2)^{\frac{\theta}{2}}}\Bigg) \int_{\mathbb{S}^{N-1}} \frac{\d y}{\left(1+r^2-2ry_N\right)^{\frac{N+2 s}{2}}} \dd r\nonumber \\
&=2c_{N, s} |x|^{-2s} \pv \int_{0}^{\infty} r^{N-1} \Bigg(\frac{1}{(t^2+|x|^2)^{\frac{\theta}{2}}}-\frac{1}{(t^2+r^2|x|^2)^{\frac{\theta}{2}}}\Bigg) \nonumber\\ 
&\hspace{3cm} \times \mathrm{vol}(\mathbb{S}^{N-2})\int_{0}^\pi\frac{\sin^{N-2}(\alpha_1)}{\big(1+r^2-2r\cos(\alpha_1)\big)^{(N+2s)/2}} \dd\alpha_1 \nonumber\\
&=c_{N, s} |x|^{-2s}\pv \int_{0}^{\infty} r^{N-1} \Bigg(\frac{1}{(t^2+|x|^2)^{\frac{\theta}{2}}}-\frac{1}{(t^2+r^2|x|^2)^{\frac{\theta}{2}}}\Bigg) \psi(r)\dd r\nonumber\\
&=c_{N, s} |x|^{-2s} \, \bigg( \pv \int_{0}^{1}\cdots \dd r\ + \pv\int_{1}^\infty\cdots\dd r\bigg)\nonumber\\
&=c_{N, s} |x|^{-2s}\pv\int_{0}^1\psi(r)\Bigg(r^{N-1}\left(\frac{1}{(t^2+|x|^2)^{\frac{\theta}{2}}}-\frac{1}{(t^2+r^2|x|^2)^{\frac{\theta}{2}}}\right) \nonumber \\ &\hspace{5cm} +r^{2s-1}\left(\frac{1}{(t^2+|x|^2)^{\frac{\theta}{2}}}-\frac{1}{(t^2+\frac{|x|^2}{r^2})^{\frac{\theta}{2}}}\right)\Bigg)\dd r.
\end{align}
We claim that the integral above is finite. To prove this, we distinguish two cases.

\textbf{Case 1:} $r\in (0,\gamma]$ for some $\gamma\in (\frac{6-\sqrt{32}}{2},1)$.  We use the bound
\begin{align}\label{eq:A.10}
    &\Bigg| r^{N-1}\left(\frac{1}{(t^2+|x|^2)^{\frac{\theta}{2}}}-\frac{1}{(t^2+r^2|x|^2)^{\frac{\theta}{2}}}\right) +r^{2s-1}\left(\frac{1}{(t^2+|x|^2)^{\frac{\theta}{2}}}-\frac{1}{(t^2+\frac{|x|^2}{r^2})^{\frac{\theta}{2}}}\right)\Bigg|\nonumber\\
    &\leq \frac{r^{N-1}}{|x|^\theta}\left(1+\frac{1}{r^\theta}\right)+\frac{r^{2s-1}}{|x|^\theta}(1+r^\theta).
\end{align}

\textbf{Case 2:} $r\in [\gamma,1)$. We use the power series expansion
\begin{equation*}\label{eq:power-series}
    (1+x)^\alpha=\sum_{n=0}^\infty \frac{\alpha(\alpha-1)\cdots(\alpha-n+1)}{n!} \, x^n:=\sum_{n=0}^\infty c_{\alpha,n} \, x^n \qquad\text{(for  \,$|x|<1$)}
\end{equation*}
to get, for $t>0$ small enough,
\begin{align}\label{eq:A.11}
    &\Bigg|r^{N-1}\left(\frac{1}{(t^2+|x|^2)^{\frac{\theta}{2}}}-\frac{1}{(t^2+r^2|x|^2)^{\frac{\theta}{2}}}\right)+r^{2s-1}\left(\frac{1}{(t^2+|x|^2)^{\frac{\theta}{2}}}-\frac{1}{(t^2+\frac{|x|^2}{r^2})^{\frac{\theta}{2}}}\right)\Bigg|\nonumber\\
    &=\frac{1}{|x|^\theta}\Bigg|\sum_{k=0}^\infty c_{-\frac{\theta}{2},k}\big(1-r^{\theta+2k}\big)\big(r^{2s-1}-r^{N-1-\theta-2k}\big)\frac{t^{2k}}{|x|^{2k}}\Bigg|\nonumber\\
    &\leq \frac{1}{|x|^\theta}\Bigg((1-r^\theta)\big|r^{2s-1}-r^{N-1-\theta}\big|+\sum_{k=1}^\infty \big|c_{-\frac{\theta}{2},k}\big|\big|r^{2s-1+2k}-r^{N-1-\theta}\big|\frac{t^{2k}}{(r|x|)^{2k}}\Bigg)\nonumber\\
    &\leq  \frac{1}{|x|^\theta}\Bigg((1-r^\theta)\big|r^{2s-1}-r^{N-1-\theta}\big|+C\sum_{k=1}^\infty \frac{t^{2k}}{(r|x|)^{2k}}\Bigg)\nonumber\\
    & \leq  \frac{C}{|x|^\theta}(1-r^\theta)\big|r^{2s-1}-r^{N-1-\theta}\big|
\end{align}
for some $C=C(\theta)>0$. In the last line, we used the fact that the power series in the line before converges uniformly (in $r$) to zero and hence is controlled by $(1-r^\theta)|r^{2s-1}-r^{N-1-\theta}|$. 

Moreover, we have
\begin{align}\label{eq:A.12}
    \int_{0}^1 \Bigg(\chi_{(0,\gamma]}(r) &\bigg(r^{N-1}\left(1+\frac{1}{r^\theta}\right)+r^{2s-1}(1+r^\theta)\bigg)\psi(r)\nonumber\\
    & +\chi_{[\gamma,1)}(r)\frac{|1-r^\theta|}{(1-r)^{1+2s}}r^{2s-1}|1-r^{N-\theta-2s}|(1-r)^{1+2s}\psi(r)\Bigg)\dd r<\infty.
\end{align}
Indeed, since $\psi(r)$ is bounded near zero, it is easy to see that
\begin{equation}\label{eq:A.13}
\int_{0}^1 \chi_{(0,\gamma]}(r) \bigg(r^{N-1}\left(1+\frac{1}{r^\theta}\right)+r^{2s-1}(1+r^\theta)\bigg)\psi(r)\dd r<\infty
\end{equation}
provided that $\theta>-2s$.
On the other hand, since
$$
(1-r)^{1+2s}\psi(r)=\frac{4}{(2\sqrt{r})^{\frac{N-1}{2}}}\left(\frac{1-r}{2\sqrt{r}}\right)^{1+2s}\int_{0}^1\frac{(h(1-h))^{\frac{N-3}{2}}}{\Big((\frac{1-r}{2\sqrt{r}})^2+h\Big)^{\frac{N+2s}{2}}}\dd h,
$$
which is $C^{s+\kappa(s)}([\gamma,1])$ (for some  $\kappa$ depending on $s$) by the choice of $\gamma\in (\frac{6-\sqrt{32}}{2},1)$ and \cite[Lemma 2.1]{DjitteJarohs}, we have that 
\begin{equation}\label{eq:A.14}
\int_{0}^1\chi_{[\gamma,1)}(r)\frac{|1-r^\theta|}{(1-r)^{1+2s}}r^{2s-1}|1-r^{N-\theta-2s}|(1-r)^{1+2s}\psi(r)\dd r<\infty.
\end{equation}
The claim \eqref{eq:A.12} then follows from \eqref{eq:A.13} and \eqref{eq:A.14}. This implies, in view of \eqref{eq:A.10} and \eqref{eq:A.11}, that the integral in \eqref{eq:A.9} converges. Consequently,
\begin{align*}
    &(-\Delta)^{s}(t^2+|\cdot|^ 2)^{-\frac{\theta}{2}}(x)\\
&=\frac{c_{N,s}}{|x|^{2s}}\int_{0}^1\psi(r)\Bigg(r^{N-1}\left(\frac{1}{(t^2+|x|^2)^{\frac{\theta}{2}}}-\frac{1}{(t^2+r^2|x|^2)^{\frac{\theta}{2}}}\right) +r^{2s-1}\left(\frac{1}{(t^2+|x|^2)^{\frac{\theta}{2}}}-\frac{1}{(t^2+\frac{|x|^2}{r^2})^{\frac{\theta}{2}}}\right)\Bigg)\dd r.
\end{align*}
In view of \eqref{eq:A.10}, \eqref{eq:A.11}, and \eqref{eq:A.12}, by Lebesgue's  dominated convergence theorem, we have that
\begin{align}
    &\frac{|x|^{2s}}{c_{N,s}}\lim_{t\to 0^+}(-\Delta)^{s}(t^2+|\cdot|^ 2)^{-\frac{\theta}{2}}(x)\nonumber\\
    &=\lim_{t\to 0^+}\int_{0}^1\psi(r)\Bigg(\frac{r^{N-1}}{(t^2+|x|^2)^{\frac{\theta}{2}}}-\frac{r^{N-1}}{(t^2+r^2|x|^2)^{\frac{\theta}{2}}}+\frac{r^{2s-1}}{(t^2+|x|^2)^{\frac{\theta}{2}}}-\frac{r^{2s-1}}{(t^2+\frac{|x|^2}{r^2})^{\frac{\theta}{2}}}\Bigg)\dd r\nonumber\\
    &=\frac{1}{|x|^\theta}\int_{0}^1r^{2s-1}(1-r^\theta)(1-r^{N-2s-\theta})\psi(r)\dd r\nonumber.
\end{align}
In other words, we conclude that 
\begin{equation*}
    \lim_{t\to 0^+}(-\Delta)^{s}(t^2+|\cdot|^ 2)^{-\frac{\theta}{2}}(x)=|x|^{-(\theta+2s)}c_{N,s}\int_{0}^1r^{2s-1}(1-r^\theta)(1-r^{N-2s-\theta})\psi(r)\dd r.
\end{equation*}

\end{proof}

\vspace{3mm}

\section*{Acknowledgments}

We thank A. Nazarov and E. Zuazua for their encouragement. We also thank F. Glaudo and T. K\"onig  for several helpful conversations and M. M. Fall for valuable comments on the first draft of this paper.

Nicola De Nitti is a member of the Gruppo Nazionale per l’Analisi Matematica, la Probabilit\'a e le loro Applicazioni (GNAMPA) of the Istituto Nazionale di Alta Matematica (INdAM).

This work was partially supported by the Alexander von Humboldt-Professorship
program and by the Transregio 154 Project ``Mathematical Modelling, Simulation and Optimization Using the Example of Gas Networks'' of the Deutsche
Forschungsgemeinschaft.

\vspace{3mm}

\bibliographystyle{abbrv}
\bibliography{FracH-ref.bib}

\vfill

\end{document}